\documentclass[11pt]{article}
\usepackage[T1]{fontenc}
\usepackage[utf8]{inputenc}
\usepackage{geometry}
\usepackage{amsmath}
\usepackage{amsthm}
\usepackage{enumitem}
\usepackage{setspace}
\usepackage[hidelinks]{hyperref}
\usepackage[dvipsnames]{xcolor} 
\usepackage[color=Purple!50!white, textwidth=22mm]{todonotes}
\usepackage{comment}
\allowdisplaybreaks

\makeatletter
\newtheorem{proposition}{Proposition}
\newtheorem{theorem}{Theorem}

\newtheorem{innercustomgeneric}{\customgenericname}
\providecommand{\customgenericname}{}
\newcommand{\newcustomtheorem}[2]{%
  \newenvironment{#1}[1]
  {%
   \renewcommand\customgenericname{#2}%
   \renewcommand\theinnercustomgeneric{##1}%
   \innercustomgeneric
  }
  {\endinnercustomgeneric}
}

\newcustomtheorem{customthm}{Theorem}
\newcustomtheorem{customlemma}{Lemma}

\usepackage{amssymb}

\makeatother

\setenumerate[0]{label=(\roman*), ref=\roman*}

\begin{document}
\title{A note on the Alon-Saks-Seymour problem}
\author{Jacob Fox\thanks{Department of Mathematics, Stanford University, Stanford, CA 94305. Email: {\tt jacobfox@stanford.edu}. Research supported by NSF awards DMS-2452737 and DMS-2154129.}}
\date{}
\maketitle

\begin{abstract}
Let $f(k)$ be the maximum possible chromatic number of a graph whose edge set can be partitioned into at most $k$ complete bipartite graphs. Alon, Saks, and Seymour conjectured that $f(k)=k+1$ for all $k$. While the conjecture was verified for $k \leq 9$ by Gao et al., it was disproved by Huang and Sudakov, and further Balodis et al. proved that $f(k) \geq 2^{\widetilde{\Omega}((\log k)^2)}$. 

In this note, we give a simple proof of the recursive upper bound $f(k+1) \leq f(k)+f(\lfloor k/4 \rfloor)$. Consequently, $f(k) \leq 2^{(\log_2 (4k))^2/4}$ for $k \geq 1$. This improves the previous best known upper bound of Mubayi and Vishwanathan in the exponent by a factor which is asymptotically two. Note that these bounds are sharp up to a lower order factor in the exponent by the result of Balodis et al. 

\end{abstract}

\section{Introduction}

For a positive integer $k$, let $f(k)$ be the maximum possible chromatic number of a graph whose edge set can be partitioned into at most $k$ complete bipartite graphs. It is also convenient to set $f(0)=1$. As a generalization of the Graham-Pollak theorem, Alon, Saks, and Seymour conjectured that $f(k)=k+1$ for all $k$. Gao, McKay, Naserasr, and Stevens \cite{GMNS} verified the Alon-Saks-Seymour conjecture for $k \leq 9$. 

If graphs $G,G_1,G_2$ satisfy $E(G) = E(G_1) \cup E(G_2)$, then $\chi(G) \leq \chi(G_1)\chi(G_2)$. It easily follows that $f(k+1) \leq 2f(k)$ and inductively $f(k) \leq 2^k$. This was improved by Mubayi and Vishwanathan \cite{MV}, who proved that $f(k)$ is  asymptotically at most $ 2^{((\log k)^2+\log k)/2}$ (all logarithms in this paper are base $2$).

The Alon-Saks-Seymour conjecture was disproved by Huang and Sudakov \cite{HS}. They showed that $f(k) \geq ck^{6/5}$ for an appropriate constant $c$. The first super-polynomial lower bound on $f(k)$ was proved by G\"o\"os \cite{Goos}. This was subsequently improved by Balodis, Ben-David, G\"o\"os, Jain, and Kothari \cite{BBGJK} to $f(k) \geq 2^{\widetilde{\Omega}((\log k)^2)}$, which is sharp up to the lower order factor in the exponent. The Alon-Saks-Seymour problem has found interesting connections to communication complexity and learning theory (see \cite{BBGJK,HS}). 

Here we prove the following recursive upper bound. It improves the bound of Mubayi and Vishwanathan in the exponent by a factor $2$. 

\begin{theorem}\label{main}
We have $f(k+1) \leq f(k)+f(\lfloor k/4 \rfloor)$. Consequently, $f(k) \leq 2^{(\log_2 (4k))^2/4}$ for $k \geq 1$. 
\end{theorem}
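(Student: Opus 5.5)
The plan is to prove the recursion by locating a single biclique whose removal, together with recoloring one of its sides, costs few extra colors; the explicit bound then follows by induction.

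\textbf{Setup.} Let $G$ have its edge set partitioned into $k+1$ complete bipartite graphs $B_0,\dots,B_k$, where $B_i$ has parts $X_i,Y_i$. Fix an index $i$ and one of its sides, say $X_i$. Deleting the vertices of $X_i$ destroys every edge of $B_i$. So $G-X_i$ is covered by the other $k$ bicliques, restricted to $V\setminus X_i$, and can be properly colored with $f(k)$ colors.

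\textbf{Coloring the side.} Next we color $G[X_i]$ with a disjoint palette. Since $X_i$ is independent in $B_i$, every edge of $G[X_i]$ lies in some $B_j$ with $j\neq i$ that is \emph{internal} to $X_i$, meaning $X_j\cap X_i\neq\emptyset$ and $Y_j\cap X_i\neq\emptyset$. Restricting these $B_j$ to $X_i$ partitions $E(G[X_i])$. Hence $\chi(G)\le f(k)+f(c)$, where $c$ is the number of bicliques internal to $X_i$. (We use that $f$ is nondecreasing, which is clear.) It therefore suffices to find a biclique and a side with $c\le\lfloor k/4\rfloor$.

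\textbf{Two structural observations.} Both follow from the fact that the $B_j$ are edge-disjoint.
\begin{enumerate}
\item No $B_j$ is internal to both $X_i$ and $Y_i$. Otherwise take $x\in X_j\cap X_i$ and $y\in Y_j\cap Y_i$. Then $xy$ lies in both $B_i$ and $B_j$, a contradiction.
\item If $B_j$ is internal to a side of $B_i$, then $B_i$ is internal to no side of $B_j$. Say $x\in X_i\cap X_j$ and $x'\in X_i\cap Y_j$, and let $y\in Y_i$. If $y\in Y_j$, then $xy\in B_i\cap B_j$. If $y\in X_j$, then $x'y\in B_i\cap B_j$. So $Y_i$ is disjoint from $X_j\cup Y_j$, and neither $X_j$ nor $Y_j$ meets both sides of $B_i$.
\end{enumerate}

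\textbf{Averaging.} Define a digraph on $\{0,\dots,k\}$ with an arc $i\to j$ when $B_j$ is internal to a side of $B_i$. By (ii) this relation is antisymmetric, so it has at most $\binom{k+1}{2}$ arcs. Hence some $i$ has out-degree $c_i\le k/2$. By (i), these $c_i$ bicliques split between $X_i$ and $Y_i$, so one side receives at most $c_i/2\le k/4$ of them. Being an integer, this count is at most $\lfloor k/4\rfloor$, which gives $f(k+1)\le f(k)+f(\lfloor k/4\rfloor)$. I expect this combinatorial step, namely finding the right relation and the antisymmetry (ii), to be the crux.

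\textbf{The explicit bound.} Let $g(k)=2^{(\log(4k))^2/4}$; we prove $f(k)\le g(k)$ by induction on $k$.
\begin{enumerate}
\item Base cases: $f(1)=2=g(1)$. For $1\le k<4$ we have $f(k+1)\le g(k)+1$, and a direct numerical check shows $g(k)+1\le g(k+1)$; for example, $g(2)\approx 4.76\ge 3$.
\item Inductive step, $k\ge 4$: by monotonicity of $g$, $g(\lfloor k/4\rfloor)\le g(k/4)=2^{(\log k)^2/4}$. One computes $g(k)/g(k/4)=2^{\log k+1}=2k$. So $f(k+1)\le g(k)\bigl(1+\tfrac{1}{2k}\bigr)$, and it remains to show $g(k+1)/g(k)\ge 1+\tfrac1{2k}$.
\item The exponent of $g$ increases from $k$ to $k+1$ by $\tfrac14\log\bigl(1+\tfrac1k\bigr)\bigl(\log(4k)+\log(4k+4)\bigr)\ge \tfrac14\cdot\tfrac1k\cdot 2\log(4k)\ge \tfrac1k$, using $\log(1+x)\ge x$ on $[0,1]$.
\item Finally, $\log\bigl(1+\tfrac1{2k}\bigr)\le \tfrac{1}{2k\ln 2}<\tfrac1k$, which completes the induction.
\end{enumerate}
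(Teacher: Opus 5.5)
Your proof is correct. For the recursion it is essentially the paper's argument. The paper also builds an oriented auxiliary digraph on the $k+1$ bicliques, with the arcs reversed, so it picks a vertex of in-degree at most $k/2$ rather than out-degree. It then uses the same fact that no other biclique has edges inside both parts of the chosen one, and halves. Your antisymmetry proof differs slightly from the paper's one-edge contradiction: it gives the stronger fact that $Y_i$ misses $X_j\cup Y_j$ altogether, which also implies your observation (i).

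The genuine difference is in deriving the explicit bound. The paper unrolls the recursion from $k-1$ down to $\lfloor k/4\rfloor$ to get $f(k)\le(\lceil 3k/4\rceil+1)f(\lfloor k/4\rfloor)\le k\,f(\lfloor k/4\rfloor)$ for $k\ge 4$. It then closes a strong induction using only the exact identity $(\log 4k)^2/4=1+\log k+(\log k)^2/4$, with no analytic estimates. You instead run a one-step induction. The same identity appears in your argument as $g(k)/g(k/4)=2k$, but you then need the extra inequalities $\log(1+x)\ge x$ and $\log(1+y)\le y/\ln 2$ to show $g(k+1)/g(k)\ge 2^{1/k}>1+\frac{1}{2k}$.

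Both routes are complete. The paper's is a bit more economical and isolates the clean intermediate inequality $f(k)\le k\,f(\lfloor k/4\rfloor)$. Yours checks directly that $g$ survives each single step of the recursion, and it does so with visible slack, since the exponent increment is in fact at least $\log(4k)/(2k)$.
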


The proof of the latter bound in Theorem \ref{main} follows from the former recursive bound by strong induction on $k$, with the base cases $1 \leq k \leq 3$ being easy to check. For $k \geq 4$, by applying the recursive inequality for $k-1,\ldots,\lfloor k/4 \rfloor$ and using monotonicity of $f$, we get $f(k) \leq (\lceil 3k/4 \rceil +1)f(\lfloor k/4 \rfloor) \leq kf(\lfloor k/4 \rfloor)$. Then, for $k \geq 4$ we get by the induction hypothesis that $$f(k) \leq kf(\lfloor k/4 \rfloor) \leq k2^{(\log k)^2/4} = 2^{\log k + (\log k)^2/4} \leq 2^{(\log(4k))^2 /4}.$$
For brevity, we omit the proof that the recursive bound implies $f(k) \leq 2^{\frac{1}{4}(\log k)^2 - \frac{1}{2}(\log k)\log \log k +O(\log k)}$.

We are left to prove the first part of Theorem \ref{main}. As a warmup, we first give a very simple proof of a weaker recursive bound which can recover the Mubayi-Vishwanathan bound. 

\begin{proposition}\label{easier}
We have $f(k+1) \leq f(k)+f(\lfloor k/2 \rfloor)$. \end{proposition}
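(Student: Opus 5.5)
The plan is to split off one biclique of the partition and colour two vertex sets with disjoint palettes. Let $G$ be a graph whose edge set is partitioned into $k+1$ complete bipartite graphs $B_0,B_1,\ldots,B_k$, where $B_i$ has parts $X_i,Y_i$. Write $X=X_0$ and $Y=Y_0$, so $X\cap Y=\emptyset$. Every edge of $B_0$ has an endpoint in $X$ and an endpoint in $Y$. Hence the induced subgraph $G[V\setminus X]$ contains no edge of $B_0$. Its edge set is therefore partitioned into the (possibly empty) restrictions of $B_1,\ldots,B_k$, each of which is again complete bipartite, so $\chi(G[V\setminus X])\le f(k)$. The same holds for $G[V\setminus Y]$. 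It then suffices to show that for one of the two sides, say $X$, the induced graph $G[X]$ has its edges partitioned into at most $\lfloor k/2\rfloor$ bicliques. We would then colour $V\setminus X$ with $f(k)$ colours and $X$ with $f(\lfloor k/2\rfloor)$ fresh colours, giving a proper colouring of $G$ with $f(k)+f(\lfloor k/2\rfloor)$ colours.

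For the key step, note that the edges of $G[X]$ are partitioned by the restrictions of the $B_i$ to $X$, namely the bicliques with parts $X_i\cap X$ and $Y_i\cap X$. Only those $i\ge 1$ for which this restriction has an edge matter; call this index set $I_X$, and define $I_Y$ analogously. I claim $I_X\cap I_Y=\emptyset$. Suppose $B_i$ had an edge $x_1x_2$ with $x_1\in X_i\cap X$, $x_2\in Y_i\cap X$, and an edge $y_1y_2$ with $y_1\in X_i\cap Y$, $y_2\in Y_i\cap Y$. Then $x_1y_2$ is an edge of $B_i$, since $x_1\in X_i$ and $y_2\in Y_i$. It is also an edge of $B_0$, since $x_1\in X$ and $y_2\in Y$. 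This contradicts the assumption that the $B_j$ partition the edge set.

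Since $I_X$ and $I_Y$ are disjoint subsets of $\{1,\ldots,k\}$, one of them has size at most $\lfloor k/2\rfloor$. By symmetry assume it is $I_X$. Then $\chi(G[X])\le f(\lfloor k/2\rfloor)$, using monotonicity of $f$ and the convention $f(0)=1$ when $I_X$ is empty. Taking the maximum over all such $G$ gives $f(k+1)\le f(k)+f(\lfloor k/2\rfloor)$.

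The only real content is the disjointness observation in the second paragraph; the remaining steps are bookkeeping about restrictions of bicliques and using disjoint palettes.
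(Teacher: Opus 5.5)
Your proposal is correct and follows the paper's proof exactly: remove one biclique with parts $X,Y$, observe that no other biclique can have an edge inside both $X$ and $Y$ (that would force an edge shared with the removed biclique), and colour the cheaper side with $f(\lfloor k/2\rfloor)$ fresh colours and its complement with $f(k)$ colours. Your version simply spells out the disjointness argument and the restriction bookkeeping in more detail than the paper does.
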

\begin{proof}
Let $G=(V,E)$ be a graph whose edge set can be partitioned into $k+1$ complete bipartite graphs. Let $A,B$ denote the parts of one of these complete bipartite graphs. Every other complete bipartite graph in the edge-partition cannot have  an edge internal to $A$ and an edge internal to $B$ since otherwise it would also contain an edge in $A \times B$. Hence, $G[A]$ or $G[B]$ (say $G[A]$) can be edge-partitioned into at most $\lfloor k/2 \rfloor$ complete bipartite graphs. Thus, the chromatic number of the induced subgraph on $A$ satisfies $\chi(G[A]) \leq f(\lfloor k/2 \rfloor)$. Also, $G[V \setminus A]$ can be edge-partitioned into at most $k$ complete bipartite graphs, so $\chi(G[V \setminus A]) \leq f(k)$. 
Finally, $\chi(G) \leq \chi(G[A])+\chi(G[V \setminus A]) \leq  f(k)+f(\lfloor k/2 \rfloor)$. 
\end{proof}

\begin{proof}[Proof of Theorem \ref{main}]
Let $G=(V,E)$ be a graph whose edge set can be partitioned into $k+1$ complete bipartite graphs. Let $H_i$ denote the $i^{\textrm{th}}$ complete bipartite graph of this edge partition and $A_i,B_i$ denote its two parts. If $H_i$ has an edge with vertices in a part of $H_j$, then $H_j$ cannot have an edge with vertices in a part of $H_i$. Indeed, if $(a_i,b_i) \in A_i \times B_i$ is an edge of $H_i$ in a part of $H_j$ (say $A_j$), and $(a_j,b_j) \in A_j \times B_j$ is an edge of $H_j$ in a part of $H_i$ (say $A_i$), then $(b_j,b_i) \in A_i \times B_i$ and $(b_i,b_j) \in A_j \times B_j$, so $(b_i,b_j)$ is an edge of both $H_i$ and $H_j$, contradicting that these complete bipartite graphs are edge-disjoint. 

Construct the auxiliary directed graph $D$ with vertex set $[k+1]$ with an edge $i \rightarrow j$  if the vertices of an edge of $H_i$ are in a part of $H_j$. By the above discussion, $D$ is an oriented graph, and so it has a vertex $j$ of indegree at most $k/2$. As in the proof of Proposition \ref{easier}, for each $i$, $H_i$ can have an edge in at most one of the two parts of $H_j$. Hence, $G[A_j]$ or $G[B_j]$ (say $G[A_j]$) can be edge-partitioned into at most $\lfloor k/4 \rfloor$ complete bipartite graphs. It follows that $\chi(G) \leq \chi(G[V \setminus A_j])+\chi(G[A_j]) \leq f(k)+f(\lfloor k/4 \rfloor)$. 
\end{proof}


\begin{thebibliography}{10}


\bibitem{BBGJK} K. Balodis, S. Ben-David, M. G\"o\"os, S. Jain, and R.  Kothari, Unambiguous DNFs and Alon-Saks-Seymour, {\it SIAM J. Comput.}, Special Section FOCS 2021 (2023), FOCS21-157--FOCS21-173.

\bibitem{GMNS} Z. Gao, B. D. McKay, R. Naserasr, and B. Stevens, Bipartite edge partitions and the former Alon-Saks-Seymour conjecture, {\it Australas. J. Combin.} {\bf 66} (2016), 211--228.

\bibitem{Goos} M. G\"o\"os, Lower bounds for clique vs. independent set, in {\it FOCS 2015}, pp. 1066--1076. 



\bibitem{HS} H. Huang and B. Sudakov, A counterexample to the Alon-Saks-Seymour conjecture and related problems, {\it Combinatorica} {\bf 32} (2012), 205--219.

\bibitem{MV} D. Mubayi and S. Vishwanathan, Bipartite coverings and the chromatic number, {\it Electron. J. Combin.} {\bf 16} (2009), Note 34, 5 pp.


\end{thebibliography}
\end{document}